\newtheorem{thm}{Theorem}[section]
\theoremstyle{definition}
\theoremstyle{remark}
\newtheorem{remark}[thm]{Remark}
\newcommand{\diff}{\mathrm{d}}
\newcommand{\sign}{\operatorname{sign}}
\newcommand{\abs}[1]{\left|#1\right|}\title{Skeleton for the one-dimensional aggregation equation}
\author{Juan Carlos Cantero and Joan Orobitg\\
  \small Departament de Matem\`atiques, Universitat Aut\`onoma de Barcelona
}
\begin{document}
\maketitle

\begin{abstract}
For the aggregation equation in $\mathbb{R}$, we consider the evolution of an initial density corresponding to the characteristic function of some set $\Omega_0$. We study the limit measure at the blow up time 1 for $\Omega_0$ open or compact and we inspect the limit set (skeleton) where this measure is supported.\\
\textbf{MSC classes:} 35Q35 (Primary) 35F20, 35B30 (Secondary)

\end{abstract}

\maketitle

\section{Introduction}
Consider the aggregation equation
\begin{equation}
\begin{cases}
\rho_t + \mathrm{div}(\rho v) =0,\\
v(\cdot,t) = -\nabla N \ast \rho(\cdot,t),\\
\rho(\cdot,0)=\rho_0,
\end{cases}
\label{jc100502}
\end{equation}
where $N$ is the fundamental solution of the laplacian and $\rho_0$ is a bounded compactly supported function. This problem has been a very active area of research in the literature (see \cite[Section 1]{bll}).

As in \cite{bll}, we can compute an explicit solution to \eqref{jc100502} along the flow map $X(\cdot,t)$ as
\begin{equation}
\rho(X(\alpha,t),t) = \left(\frac{1}{\rho_0(\alpha)}- t \right)^{-1} =\frac{\rho_0(\alpha)}{1-t\rho_0(\alpha)}, 
\label{solnflow}
\end{equation}
where $X(\cdot,t)$ is defined, as usual, by the ODE 
\begin{equation}\label{trajsoln}
\begin{cases} \frac{\diff}{\diff t}X(\alpha,t) = v(X(\alpha,t),t),\\ X(\alpha,0)=\alpha. 
\end{cases} 
\end{equation}

We are interested in the case of the evolution of an aggregation patch, that is, when $\rho_0=\chi_{\Omega_0}$ for some bounded domain $\Omega_0$. In this case the explicit solution can be simply written, according to \eqref{solnflow}, as
\begin{equation}
\rho(\cdot,t)= \frac{1}{1-t}\chi_{\Omega_t},
\label{patchsolution}
\end{equation}
where $\Omega_t = X(\Omega_0,t)$, $0\le t < 1$. That is, the set $\Omega_t$ is the evolution of $\Omega_0$ along $X(\cdot,t)$. With this expression, it is clear that the blow-up occurs at time $t=1$. It is well known that equation \eqref{jc100502} preserves the $L^1$ norm of the scalar $\rho$, and hence
$$\|\rho(\cdot,t)\|_{L^1} = \frac{1}{1-t} \left|\Omega_t\right|  = \|\rho_0\|_{L^1} = \left|\Omega_0\right|.$$
Therefore
$$\left|\Omega_t\right| = (1-t)\left|\Omega_0\right| \to 0 \,\,\,\,\,\,\text{as }\,\,\,t \to 1. $$

In \cite{bglv}, they show that for an initial data of the form $\rho_0=\chi_{\Omega_0}$ with $\Omega_0$ a domain of class $C^{1,\gamma}$ then the domain $\Omega_t$ in the solution \eqref{patchsolution} is also of class $C^{1,\gamma}$ when $t<1$.

A challenging question is to study in detail the structure of the \textit{skeleton}, that is, the blow-up set 
$$\Omega_1 = \lim_{t\to 1^-} X(\Omega_0,t).$$

Explicit computations  in \cite[Section 4.1]{bll} shows that an elliptical patch collapses at time 1 to a measure supported in an interval contained in the straight line defined by the major axis. On the other hand, numerical simulations in \cite[Sections 4.2-4.4]{bll} suggest a much more cumbersome behaviour for other patches not that regular, even though they collapse into a complicated skeleton of codimension one. The dynamics of one-fold symmetric patches for the two-dimensional aggregation equation has been studied by Hmidi and Li \cite{hl}.
 They showed that for domains with a suitable symmetry structure the solution converges weakly towards a finite measure supported in the union of disjoint segments lying in the real axis.

Since the arbitrary dimension problem is presently rather ambitious, we consider a toy model which corresponds to equation \eqref{jc100502} but just for dimension 1. In this case, the fundamental solution of the laplacian is simply \linebreak$N(x)=\frac{1}{2}\left|x\right|,$ and so the kernel is
$$K(x)=-N_x(x) = -\frac{1}{2}\text{sign}(x) = \begin{cases} 1/2&\text{if }x<0, \\ -1/2&\text{if }x\ge 0. \end{cases} $$

As we will see later, an advantage of considering this case is that the velocity $v(X(\alpha,t),t)$ is independent of $t$ and hence it can be computed at the initial time as $v_0(\alpha)=(K\ast \chi_{D_0})(\alpha)$. So the particle trajectories are ``straight lines'' as a function of $t$ in $\mathbb{R}\times [0,1)$ in the sense that they can be written as 
$$X(\alpha,t)=\alpha+v_0(\alpha)t.$$ In dimension 2 or bigger we apparently lose this nice property which is fundamental to develop explicit computations concerning the behaviour of the skeleton.

\subsection{Outline of the paper}
In Section \ref{skeopen} we prove the evolution of an open set towards a countable collection of Dirac deltas.  In Section \ref{skecompact} we consider the evolution of a compact set.  We prove that any compact set of vanishing Lebesgue measure is the skeleton of some well chosen compact set. In particular, Hausdorff dimensions can be distorted arbitrarily. Indeed, we prove in Theorem \ref{compact} a more general result involving measures.
\section{Open set}
\label{skeopen}
As we just said, we consider the one-dimensional aggregation equation. Explicitly, we have
\begin{equation}
 \begin{cases}
\rho_t + (\rho v)_x = 0,\\
v(\cdot,t) = - N_x \ast \rho(\cdot,t), \\
\rho(\cdot,0)=\rho_0=\chi_{\Omega_0},
\end{cases}
\label{1dimagg}
\end{equation}
where $\rho:\mathbb{R}\times \mathbb{R}^+\rightarrow \mathbb{R}$ and $v:\mathbb{R}\times \mathbb{R}^+\rightarrow \mathbb{R}$, and $N$ is the fundamental solution of the Laplace operator, i.e. $N(x) = \frac{1}{2}\left|x\right|$, and therefore $N_x(x)=\frac{1}{2}\sign(x)$. The following theorem, describing the evolution of a general bounded open set in $\mathbb{R}$ under equation \eqref{1dimagg}, contains the choice of a representative in $L^\infty(\mathbb{R})$ for the initial condition.

\begin{thm}
Let $\Omega_0 = \bigcup\limits_{i=1}^{\infty} I_i \subset \mathbb{R}$, with $I_i=(\alpha_i,\beta_i)$ pairwise disjoint, be a bounded open set. 
Let $\rho$ and $v$ be the solution of \eqref{1dimagg} with initial condition 
$\chi_{\Omega_0}$. Then, $\Omega_1 = \bigcup\limits_{i=1}^{\infty} \{x_i\}$ and
 if $\diff \mu_t = \rho(\cdot, t)\,\diff x$, we have $\mu_t \overset{w}{\longrightarrow} \mu_1= \sum\limits_{i=1}^{\infty} \abs{I_i} \delta_{x_i} $, where $\delta_{x_i}$ is the Dirac measure at $x_i$.
\label{thmskeleton1}
\end{thm}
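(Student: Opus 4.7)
The plan is to combine the explicit formula $X(\alpha,t)=\alpha+tv_0(\alpha)$ derived in the introduction with an elementary computation of $v_0$ on each component $I_i$, and then transfer the result to the measures $\mu_t$ through a change of variables.

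\textbf{Step 1: explicit form of the trajectories.} First I would compute $v_0(\alpha)=(K\ast\chi_{\Omega_0})(\alpha)$ for $\alpha\in I_i$. Writing $m_i^-:=\abs{\Omega_0\cap(-\infty,\alpha_i)}$ and $m_i^+:=\abs{\Omega_0\cap(\beta_i,\infty)}$, splitting the integral $\int_{\Omega_0}\sign(\alpha-y)\,\diff y$ at $\alpha$ gives
$$v_0(\alpha)=\tfrac{1}{2}\bigl(m_i^+-m_i^-+\alpha_i+\beta_i\bigr)-\alpha,\qquad \alpha\in I_i,$$
so $v_0$ is affine with slope $-1$ on every $I_i$. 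In particular $X(\alpha,1)=\alpha+v_0(\alpha)$ equals the constant
$$x_i:=\tfrac{1}{2}\bigl(m_i^+-m_i^-+\alpha_i+\beta_i\bigr)$$
for every $\alpha\in I_i$, which already yields $\Omega_1=\bigcup_i\{x_i\}$. Since $v_0'\ge -1$ a.e.\ on $\RR$ (it equals $-1$ on $\Omega_0$ and $0$ off $\overline{\Omega_0}$), the map $X(\cdot,t)$ is strictly increasing for $t<1$, so trajectories cannot cross before the blow-up time.

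\textbf{Step 2: weak convergence.} For $\varphi\in C_b(\RR)$ I would use \eqref{patchsolution} together with the change of variables $x=X(\alpha,t)$, whose Jacobian on $\Omega_0$ equals $1+tv_0'(\alpha)=1-t$, to obtain
$$\int\varphi\,\diff\mu_t=\frac{1}{1-t}\int_{\Omega_t}\varphi(x)\,\diff x=\int_{\Omega_0}\varphi\bigl(X(\alpha,t)\bigr)\,\diff\alpha.$$
The integrand is bounded by $\|\varphi\|_\infty$ on the finite-measure set $\Omega_0$, and on each $I_i$ it converges pointwise to $\varphi(x_i)$ as $t\to 1^-$. Dominated convergence then gives
$$\int\varphi\,\diff\mu_t\;\longrightarrow\;\sum_{i=1}^{\infty}\abs{I_i}\varphi(x_i)=\int\varphi\,\diff\mu_1,$$
which is exactly the claimed weak convergence.

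\textbf{The main obstacle.} The most delicate ingredient is the countable decomposition: the identity $v_0'=-\chi_{\Omega_0}$ has to be understood pointwise on the interior of each $I_i$ even though $\partial\Omega_0$ may be topologically complicated (for instance, a fat Cantor set), and the $t\to 1$ limit must be interchanged with an infinite sum. Both issues dissolve once one notices that $\partial\Omega_0$ is Lebesgue negligible, that $\sum_i\abs{I_i}=\abs{\Omega_0}<\infty$ because $\Omega_0$ is bounded, and that the dominating constant $\|\varphi\|_\infty$ is uniform in $t$; no further regularity of $\Omega_0$ is required, which is what makes the statement work for arbitrary bounded open sets. A minor side remark worth noting is that nothing prevents two distinct indices $i\ne j$ from producing the same $x_i$, in which case the corresponding atoms of $\mu_1$ simply add up — a harmless abuse of notation in the statement.
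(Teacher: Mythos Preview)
Your argument is correct and follows essentially the same route as the paper: compute $v_0$ explicitly on each $I_i$, observe it is affine with slope $-1$ there so that $X(\cdot,1)$ collapses $I_i$ to a single point, and then push the measures $\mu_t$ forward through $X(\cdot,t)$; your dominated-convergence step is in fact a slightly cleaner way to justify the interchange of limit and infinite sum than the paper's $\inf/\sup$ sandwich. One caveat: the aside that ``$\partial\Omega_0$ is Lebesgue negligible'' is false in general (the boundary of a bounded open set can have positive measure --- your own fat-Cantor example shows this), but fortunately nothing in your actual proof uses it, since the change of variables and the DCT only involve points of the open set $\Omega_0$ itself.
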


\begin{proof}[Proof of Theorem \ref{thmskeleton1}]
In order to normalise a representative choice in $L^{\infty}(\mathbb R)$ for the initial condition, we assume without loss of generality that given $x\in I_j$, $y\in I_k$ with $j\neq k$, we have $\abs{I(x,y)\cap \Omega_0} < \abs{x-y},$
where $I(x,y)$ indicates the minimum interval containing $x$ and $y$.
In particular, this implies that instead of $(a_1,a_2) \cup (a_2, a_3)$ we will have $(a_1,a_3)$. 
It will also be seen from the proof, that if $[a, b]$ is the smallest closed interval containing $\Omega_0$ and $|\Omega_0| = 2L >0$ then the skeleton $\Omega_1$ is contained in $[a+L,b-L]$, because  it is easily verified that $v_0(a)=L$ and $v_0(b)=-L$.

First of all, let us see that under these assumptions, there exists spatial derivative for the trajectory map at least for  $\alpha \in \Omega_0$. Recall that, for any $0\le t<1$ the trajectory map $X(\cdot,t)$ is the unique homeomorphism solution to the ODE
$$\begin{cases} \frac{\diff X(\alpha,t)}{\diff t}=v(X(\alpha,t),t),\\ 
X(\alpha,0)=\alpha. \end{cases}$$
Hence, differentiating with respect to $\alpha$ the previous equation, we obtain
\begin{equation}
\begin{split}
\frac{\diff}{\diff \alpha} \left(\frac{\diff X(\alpha,t)}{\diff t}\right)&= \frac{\diff}{\diff \alpha}\left(v(X(\alpha,t),t)\right) = \\
&= \frac{\diff}{\diff \alpha}\left[\frac{1}{2} \int_{X(\alpha,t)}^{+\infty}\rho(y,t)\,\diff y -  \frac{1}{2} \int_{-\infty}^{X(\alpha,t)} \rho(y,t)\,\diff y\right] \\
&= -\rho(X(\alpha,t),t) \frac{\diff X(\alpha,t)}{\diff \alpha} = -\frac{1}{1-t}\rho_0(\alpha) \frac{\diff X(\alpha,t)}{\diff \alpha}.
\end{split}
\end{equation}

Interchanging derivatives we get
$$\frac{\diff}{\diff t}\left(\frac{\diff X(\alpha,t)}{\diff \alpha}\right) = -\frac{1}{1-t}\rho_0(\alpha) \left(\frac{\diff X(\alpha,t)}{\diff \alpha}\right). $$

Integrating and using that the homeomorphism $X(\cdot,0)$ is the identity map, we obtain the spatial derivative (for any  $\alpha \in \Omega_0$).
$$\frac{\diff X(\alpha,t)}{\diff \alpha} = 1-t. $$

Secondly, we can prove that the velocity of a particle is constant along the trajectory, this is,
\begin{equation}
v(X(\alpha,t),t) = v(\alpha,0)=:v_0(\alpha). 
\label{jc100504}
\end{equation}

This just requires a simple computation, involving a change of variable $y=X(\alpha',t)$.
\begin{equation*}
\begin{split}
v(X(\alpha,t),t) &=  \frac{1}{2} \int_{-\infty}^{+\infty}\sign(X(\alpha,t)-y)\rho(y,t)\,\diff y =\\ &= \frac{1}{2} \int_{-\infty}^{+\infty}\sign(X(\alpha,t)-X(\alpha',t))\rho(X(\alpha',t),t)\frac{\diff X(\alpha',t)}{\diff \alpha}\,\diff \alpha' = \\ 
&= \frac{1}{2} \int_{-\infty}^{+\infty}\sign(X(\alpha,t)-X(\alpha',t))\frac{1}{1-t}\rho_0(\alpha')\frac{\diff X(\alpha',t)}{\diff \alpha}\,\diff \alpha' = 
\\&= \frac{1}{2} \int_{\alpha' \in \Omega_0}\sign(X(\alpha,t)-X(\alpha',t))\frac{1}{1-t}\frac{\diff X(\alpha',t)}{\diff \alpha}\,\diff \alpha' = \\
&=  \frac{1}{2} \int_{\alpha' \in \Omega_0}\sign(X(\alpha,t)-X(\alpha',t))\frac{1}{1-t}(1-t)\,\diff \alpha'  = \\
&=  \frac{1}{2} \int_{\alpha' \in \Omega_0}\sign(X(\alpha,t)-X(\alpha',t))\,\diff \alpha' =\\
&= \frac{1}{2} \int_{\alpha' \in \Omega_0}\sign(\alpha-\alpha')\,\diff \alpha'=v_0(\alpha);
\end{split}
\end{equation*}
where we have used that $X(\alpha,t)-X(\alpha',t)$ and $\alpha-\alpha'$ have the same sign since $X(\cdot,t)$ is a non-decreasing homeomorphism. Then, by \eqref{jc100504} it is clear that all particle trajectory maps are straight lines. Indeed, for $0\le t<1$, we have

$$X(\alpha,t) = \alpha + \int_0^t v(X(\alpha,s),s)\,\diff s = \alpha + \int_0^t v_0(\alpha)\,\diff s = \alpha + v_0(\alpha) t .$$
Now we can check that any $x\in \left[\alpha_i,\beta_i\right]$ has the same limit point $$\lim_{t\to 1^-} X(x,t).$$
In fact, 
\begin{equation}
\begin{split}
v_0&(x) = \frac{1}{2}\int_x^{\infty} \rho_0(y)\,\diff y - \frac{1}{2}\int_{-\infty}^x \rho_0(y)\,\diff y= \\
&= \frac{1}{2}\left[ \int_{\alpha_i}^{\infty} \rho_0(y)\,\diff y - \int_{\alpha_i}^{x} \rho_0(y)\,\diff y - 
\int_{-\infty}^{\alpha_i} \rho_0(y)\,\diff y - 
\int_{\alpha_i}^{x} \rho_0(y)\,\diff y \right]= \\
&=v_0(\alpha_i)-\int_{\alpha_i}^{x} \rho_0(y)\,\diff y = v_0(\alpha_i)-(x-\alpha_i),
\end{split}
\end{equation}
where we have used the fact that $\rho_0\equiv 1$ in $(\alpha_i,x).$ Hence, for any $x \in \left[\alpha_i,\beta_i\right],$ we have
\begin{equation}
X(x,t) = x+\left(v_0(\alpha_i)-(x-\alpha_i)\right)t \stackrel{t\to 1^-}{\longrightarrow} \alpha_i+v_0(\alpha_i),
\label{eq1}
\end{equation}
which does not depend on the choice of $x$. From now on, we  denote the limit point for each interval $I_j=(\alpha_j, \beta_j)$ as $x_j:=\alpha_j+v_0(\alpha_j)$. Finally, we have to see the convergence of the measure $\mu_t$ defined as $\diff \mu_t = \rho(x,t)\,\diff x$ towards $ \mu_1= \sum_{i=1}^{\infty} \abs{I_i} \delta_{x_i}$.

In order to prove this, let $f$ be a continuous function on $\mathbb{R}$. Then, recall
$$\rho(x,t) = \sum_{i=1}^{\infty} \frac{1}{1-t} \chi_{(\alpha_{i,t},\beta_{i,t})},$$
where $\alpha_{i,t} = X(\alpha_i,t)$ and $\beta_{i,t}=X(\beta_i,t)$. Summing up, we have
$$\langle f, \mu_t\rangle =\frac{1}{1-t} \sum_{i=1}^{\infty} \int_{\alpha_{i,t}}^{\beta_{i,t}} f(x)\,\diff x.$$

Let $m_{i,t}:=\inf\limits_{x\in (\alpha_{i,t},\beta_{i,t})} f(x)$ and $M_{i,t}:=\sup\limits_{x\in (\alpha_{i,t},\beta_{i,t})} f(x)$. Then, it is clear that for any $i$,
$$\frac{1}{1-t} m_{i,t}(\beta_{i,t}-\alpha_{i,t})  \le \frac{1}{1-t} \int_{\alpha_{i,t}}^{\beta_{i,t}} f(x)\,\diff x \le \frac{1}{1-t} M_{i,t}(\beta_{i,t}-\alpha_{i,t}). $$

On the other hand, from \eqref{eq1} we have that
$$\beta_{i,t}-\alpha_{i,t} = (1-t)(\beta_i-\alpha_i) $$
and hence 
$$ m_{i,t}(\beta_{i}-\alpha_{i})  \le \frac{1}{1-t} \int_{\alpha_{i,t}}^{\beta_{i,t}} f(x)\,\diff x \le  M_{i,t}(\beta_{i}-\alpha_{i}). $$
Both the left and the right hand sides of the previous inequality clearly tend to the same value $f(x_i)(\beta_i-\alpha_i)$, by definition of $m_{i,t}$ and $M_{i,t}$. Therefore we have  
$$ \frac{1}{1-t} \int_{\alpha_{i,t}}^{\beta_{i,t}} f(x)\,\diff x \longrightarrow f(x_i)(\beta_i-\alpha_i),\,\,\,\,\text{
as }\,\,t\to 1^-.$$ Then 
$$\langle f,\mu_t\rangle \longrightarrow \langle f, \sum_{i=1}^{\infty} \left|I_i\right| \delta_{x_i}\rangle, $$
which proves the result.

\end{proof}

We can also formulate the reciprocal result. Any bounded countable collection of points is the skeleton of some initial open set.
\begin{thm}
Let $\{x_j\}_{j=1}^{\infty}$ a bounded countable collection of points such that $[c,d]$ is the smallest closed interval containing $\bigcup\limits_{j=1}^{\infty} \{x_j\}$.
Let $\mu_1 = \sum\limits_{j=1}^{\infty} c_j \delta_{x_j}$, for $c_j>0$ and such that $\sum_{j=1}^{\infty} c_j =2L$, where $\delta_{x_j}$ is the Dirac measure at $x_j$. Then there exists a bounded open set $\Omega_0\subseteq [c-L,d+L]$ such that
the solution of \eqref{1dimagg} with initial condition $\chi_{\Omega_0}$ satisfies the following: 
\begin{enumerate}[i)]
\item $\Omega_1 = \bigcup\limits_{i=1}^{\infty} \{x_i\}.$
\item for $\Omega_t$ and for the measure 
$\diff \mu_t = \rho(x,t)\,\diff x = \frac{1}{1-t}\chi_{\Omega_t}(x)\,\diff x $
we have that $\mu_t \overset{w}{\longrightarrow} \mu_1$ as $t\to 1^-$.
\end{enumerate} 
\label{thmskeleton2}
\end{thm}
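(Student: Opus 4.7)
The strategy is to reverse-engineer the construction underlying Theorem~\ref{thmskeleton1}: I will place open intervals of prescribed lengths $c_j$ at positions such that, under the map $\alpha\mapsto\alpha+v_0(\alpha)$, each interval collapses to the required target point $x_j$. Concretely, for each $j$ set
$$S_j:=\sum_{x_k<x_j}c_k,\qquad \alpha_j:=x_j-L+S_j,\qquad \beta_j:=\alpha_j+c_j,$$
and define $I_j:=(\alpha_j,\beta_j)$ together with $\Omega_0:=\bigcup_{j\ge 1}I_j$. Each $S_j$ is unambiguously defined by the absolute convergence of $\sum c_k=2L$, so $\abs{I_j}=c_j$ and $\abs{\Omega_0}=2L$. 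Even if $\{x_j\}$ is dense in $[c,d]$, the set $\Omega_0$ is a countable union of open intervals, hence itself open and bounded.

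Next I would verify the hypotheses of Theorem~\ref{thmskeleton1} for this $\Omega_0$. For $x_j<x_k$ a direct computation gives
$$\alpha_k-\beta_j=(x_k-x_j)+\sum_{x_j<x_l<x_k}c_l\ge x_k-x_j>0,$$
so the intervals are pairwise disjoint with a strictly positive gap; in particular the maximal-merging normalisation used in Theorem~\ref{thmskeleton1} is automatically satisfied. The containment $\Omega_0\subseteq[c-L,d+L]$ follows from $\alpha_j\ge x_j-L\ge c-L$ and $\beta_j=x_j-L+S_j+c_j\le x_j+L\le d+L$.

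It remains to identify the skeleton, which is where the choice of $\alpha_j$ pays off. The separation just established determines exactly which intervals sit on each side of $\alpha_j$: the $I_k$ with $x_k<x_j$ lie entirely to the left of $\alpha_j$, while those with $x_k>x_j$ lie entirely to the right. Hence
$$v_0(\alpha_j)=\tfrac{1}{2}\abs{\Omega_0\cap(\alpha_j,\infty)}-\tfrac{1}{2}\abs{\Omega_0\cap(-\infty,\alpha_j)}=\tfrac{1}{2}(2L-S_j)-\tfrac{1}{2}S_j=L-S_j,$$
so that $\alpha_j+v_0(\alpha_j)=x_j$. Theorem~\ref{thmskeleton1} then yields both $\Omega_1=\bigcup_j\{x_j\}$ and $\mu_t\overset{w}{\longrightarrow}\sum_j \abs{I_j}\delta_{x_j}=\mu_1$. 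The only genuinely delicate point I expect is that $\{x_j\}$ may not be given as an ordered sequence (it can be dense in $[c,d]$), so the intuitive "sort left to right" argument available in the finite case must be replaced by the position-based definition of $S_j$ above; once this is done, no explicit ordering of indices is required.
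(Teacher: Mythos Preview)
Your construction is identical to the paper's (their $l_i$, $a_i$, $b_i$ are your $S_j$, $\alpha_j$, $\beta_j$), and your argument follows the same route of reducing to Theorem~\ref{thmskeleton1}; you simply fill in more of the verifications (disjointness, containment in $[c-L,d+L]$, the computation $\alpha_j+v_0(\alpha_j)=x_j$) that the paper leaves as ``one easily checks.'' Your remark about handling an unordered, possibly dense $\{x_j\}$ via the position-based sum $S_j$ is well taken and is exactly how the paper's definition of $l_i$ should be read.
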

\begin{proof}
For any $i\in \mathbb{N}$, let $l_i:=\sum_{x_j<x_i} c_j$ and define
$$\begin{cases}
a_i:=x_i+l_i- L,\\
b_i:=a_i+c_i.
\end{cases} $$
Define $\Omega_0:=\bigcup\limits_{i=1}^{\infty} (a_i,b_i)$. One easily checks that $\Omega_0$ satisfies the hypotheses of Theorem \ref{thmskeleton1} and hence by an straightforward
repetition of the argument in the proof of Theorem \ref{thmskeleton1} the theorem is proved.
\end{proof}

\begin{remark}
It is trivial to check that in Theorems \ref{thmskeleton1} and \ref{thmskeleton2}, the initial set $\Omega_0$ is not unique and the result also holds for any $\tilde{\Omega}_0$ such that
$$\Omega_0 \subseteq \tilde{\Omega}_0 \subseteq \bigcup\limits_{i=1}^{\infty} \overline{I_i} = \bigcup\limits_{i=1}^{\infty} [\alpha_i,\beta_i].$$
\end{remark}

\section{Compact set case}
\label{skecompact}
In the previous section we have seen that, when $\Omega_0$ is an open set, the limit measure is a countable combination of Dirac measures. Consequently, the Hausdorff dimension of this skeleton $\Omega_1$ is 0. Now, we shall prove that if we do not require the set to be open, we can obtain a skeleton of any Hausdorff dimension.
 Specifically, we shall prove that given a measure $\mu_1$ supported on a set $\Omega_1$ with zero length (but no necessarily having Hausdorff dimension equal to 0) we can construct a set $\Omega_0$ such that, if $\rho_0=\chi_{\Omega_0}$, then the solution $\rho$ of  \eqref{1dimagg} evolves towards $\mu_1$ as a measure.

\begin{thm} \label{compact}
 Let $K_1$ be a compact set with $\left|K_1\right|=0$ and let $\left[c,d\right]$  be the smallest closed interval containing $K_1$. Let $\mu_1$ be a measure with support equal to $K_1$ and $\mu_1(K_1)=2L$. Then, there exists a compact set $K_0$ with $\left|K_0\right|=2L$ and such that the solution $\rho(\cdot,t)$ to the  transport equation \eqref{1dimagg} with initial data $\rho_0 = \chi_{K_0}$ satisfies
$$\lim_{t\to 1^-} \rho(x,t)\diff x \overset{w}{\longrightarrow} \diff \mu_1. $$
\end{thm}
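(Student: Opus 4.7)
The plan is to build $K_0$ explicitly from the distribution function of $\mu_1$ and to recycle the flow analysis of Section~\ref{skeopen}. First I would check that for $\rho_0 = \chi_{K_0}$ the three computations in the proof of Theorem~\ref{thmskeleton1} go through identically: $\partial_\alpha X(\alpha,t) = 1-t$ for $\alpha\in K_0$, $v(X(\alpha,t),t) = v_0(\alpha)$ with $v_0 = -N_x\ast\chi_{K_0}$, and hence $X(\alpha,t) = \alpha + v_0(\alpha)\,t$; each step uses only that $\rho_0$ equals $1$ on $K_0$ and that $X(\cdot,t)$ is non-decreasing, so compactness of $K_0$ causes no extra difficulty. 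Setting $\Phi(\alpha):=\alpha+v_0(\alpha)$, the measure $\mu_t = \rho(\cdot,t)\,\mathrm{d}x$ is the push-forward $X(\cdot,t)_{*}(\lambda|_{K_0})$ of Lebesgue measure on $K_0$, and dominated convergence gives $\mu_t \overset{w}{\longrightarrow} \Phi_{*}(\lambda|_{K_0})$ as $t\to 1^-$. The theorem therefore reduces to designing $K_0$ so that $\Phi_{*}(\lambda|_{K_0}) = \mu_1$.

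To construct $K_0$, I would decompose the open set $[c,d]\setminus K_1 = \bigsqcup_i (p_i,q_i)$ and let $F(x):=\mu_1([c,x])$ be the distribution function of $\mu_1$. In $[c-L,d+L]$ I place disjoint open intervals $(a_i,b_i)$ whose lengths match those of the $K_1$-gaps, positioned so that the ``solid'' length to the left of $a_i$ equals $F(p_i)$; concretely
\[
a_i := (c-L) + F(p_i) + (p_i - c), \qquad b_i := a_i + (q_i-p_i),
\]
and I set $K_0 := [c-L,d+L]\setminus \bigcup_i (a_i,b_i)$. A short accounting verifies that the $(a_i,b_i)$ are pairwise disjoint (if $p_j=q_i$ then $\mu_1$ must have an atom at that common point, which creates a plateau of $K_0$ separating $b_i$ from $a_j$) and that $|K_0| = (d-c+2L) - \sum_i(q_i-p_i) = 2L$, since $|K_1|=0$.

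With $K_0$ so defined, a direct computation gives $|K_0\cap[c-L,a_i)| = F(p_i)$, hence $v_0(a_i) = L - F(p_i)$ and $\Phi(a_i) = p_i$; since $\Phi$ has slope $1$ on each gap, it maps $(a_i,b_i)$ affinely onto $(p_i,q_i)$, which shows $\Phi(K_0)\subseteq K_1$. To identify the push-forward, my plan is to compare distribution functions on $[c,d]$: for $x\in[c,d]$ I would write $\Phi^{-1}([c,x]) = [c-L,\alpha^{*}(x)]$ using continuity and monotonicity of $\Phi$, subtract the total length $\sum_{i:\,q_i\le x}(q_i-p_i) = |[c,x]\setminus K_1|$ of the gaps lying to the left of $\alpha^{*}(x)$, and deduce $\lambda\bigl(K_0\cap[c-L,\alpha^{*}(x)]\bigr) = F(x)$.

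The main obstacle is the bookkeeping when $x$ is an atom $x_0$ of $\mu_1$: such an atom produces a plateau of $\Phi$ of length $\mu_1(\{x_0\})$ lying inside $K_0$, and one must verify that $\alpha^{*}(x_0)$ coincides with the right endpoint of this plateau, so that the jump of $F$ at $x_0$ is reproduced exactly by the $\lambda$-mass of the plateau. Once this identification is carried out for the three cases $x\in[c,d]\setminus K_1$, $x\in K_1$ non-atomic, and $x$ an atom of $\mu_1$, the distribution functions agree on all of $[c,d]$, which forces $\Phi_{*}(\lambda|_{K_0}) = \mu_1$, and the weak convergence asserted in the theorem follows from the first paragraph.
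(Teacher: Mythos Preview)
Your proposal is correct and follows essentially the same route as the paper: your construction of $K_0$ via the distribution function $F$ coincides with the paper's construction via the ``expected gap velocity'' $v_i$ (indeed $v_i=L-F(p_i)$, so your $a_i$ equals the paper's $a_{i,0}=p_i-v_i$), and both arguments finish by showing $\mu_t=(X_t)_{\#}(\lambda|_{K_0})\overset{w}{\to}(X_1)_{\#}(\lambda|_{K_0})$ and then identifying this push-forward with $\mu_1$ by matching distribution functions, with your plateau/atom bookkeeping corresponding exactly to the paper's $x^{-},x^{+}$ analysis of $X_1^{-1}(y)$.
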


\begin{proof}
Since $K_1$ is compact, then the set $U_1=\left[c,d\right]\setminus K_1$ is open. Then it can be written as a numerable union of pairwise disjoint open intervals as 
$$U_1 = \bigcup\limits_{j=1}^{\infty} (a_{j,1},b_{j,1}).$$
For a point $x\in (a_{i,1},b_{i,1})$ we associate the following velocity (recall that in Theorem \ref{thmskeleton1} we saw that velocity is constant along trajectories)
\begin{equation}
v_i=\frac{1}{2}\left\{\mu_1\left(K_1\cap \left[\frac{a_{i,1}+b_{i,1}}{2},d\right] \right)- \mu_1\left(K_1\cap \left[c,\frac{a_{i,1}+b_{i,1}}{2}\right]\right)\right\}.
\label{3.1}
\end{equation}

Now we define 
$$\begin{cases}
a_{i,0} = a_{i,1}-v_i,\\
b_{i,0}=a_{i,0}+ (b_{i,1}-a_{i,1}) = b_{i,1}-v_i.
\end{cases}$$
and also we let $U_0 = \bigcup\limits_{i=1}^{\infty} (a_{i,0},b_{i,0})$ and set $a:=c-L$, $b:=d+L$. We define $K_0=[a,b]\setminus U_0$. 

The spirit of this procedure is as follows. We have observed in the proof of Theorem \ref{thmskeleton1} that the intervals where $\rho_0$ is 0 just move by keeping its length, because the velocity is the same for all points in the interval. What we have done here is keeping the length of the intervals in the complementary of $K_1$ and move them at the expected speed (constant for each interval) for them. So we get the right compact set.

Consider $\rho_0 = \chi_{K_0}$ ($\diff \mu_0 = \chi_{K_0}\,\diff x$ the Lebesgue measure restricted on $K_0$) and let $\rho(\cdot,t)$ be the solution to the  transport equation \eqref{1dimagg}. We have  
\begin{equation}
v(x)=(-\sign\ast \chi_{K_0})(x)=\frac{1}{2}\left\{ \left|K_0\cap (x,b)\right| - \left|K_0\cap(a,x)\right|\right\} .
\label{3.2}
\end{equation}
Observe that if $x\in U_0$ then $x\in (a_{i,0}, b_{i,0})$ for some index $i$ and therefore $v(x)=v_i$ as defined in \eqref{3.1}.
For $0\le t<1$, the flow provided by $v$ is
$$
X_t(x)=X(x,t)=\begin{cases}
x+tL&\text{if }x\le a,\\
x+v(x)t& \text{if }a<x<b, \\
x-tL &\text{if }b \le x,
\end{cases}
$$ 
a non decreasing homeomorphism in $\mathbb R$. Denote $K_t= X(K_0,t)$. On the other hand,
$$
\lim_{t\to 1^{-}}X_t(x)=X_1(x)=\begin{cases}
x+L&\text{if }x\le a,\\
x+v(x)& \text{if }a<x<b, \\
x-L &\text{if }b \le x,
\end{cases}
$$ 
is a surjective continuous map from $[a,b]$ to $[c,d]$. 
It is clear by construction that $U_1=X_1(U_0)$ and hence $K_1 = X_1(K_0)$ too. 

The solution of \eqref{1dimagg} is $\rho (x,t)=\dfrac{1}{1-t}\chi_{K_t}$ and $d \mu_t = \rho(x,t) dx$, $0\le t<1$. Given a Borel set $B$ of $\mathbb R$ one checks
$$
\mu_t(B) = \frac{1}{1-t}|K_t \cap B|= \mu_0(X_t^{-1}(B)),
$$
that is, $\mu_t$ is the image of $\mu_0$ under $X_t$, $\mu_t= (X_t)_{\#} \mu_0$.
It remains to check that $\mu_t$ converges weakly to $\mu_1$
when $t$ tends to 1. Let $g$ a continuous function on $\mathbb R$. Then, because $X_t \longrightarrow X_1$ continuously when $t\to 1$,
\begin{equation*}
\begin{split}
\int g d\mu_t &= \int g d (X_t)_{\#} \mu_0= \int (g\circ X_t)d\mu_0   \\
&= \int (g\circ X_t) \chi_{K_0}dx \longrightarrow\int (g\circ X_1) \chi_{K_0}dx
= \int g d (X_1)_{\#} \mu_0.
\end{split}
\end{equation*}
We have $\mu_t  \overset{w}{\longrightarrow} (X_1)_{\#} \mu_0$. Finally, we need to verify that $\mu_1 = (X_1)_{\#} \mu_0 $.

Given $y\in [c,d]$, set $X_1^{-1}(y)=\{x\in[a,b] : X_1(x)=y \}$.
We claim that $X_1^{-1}(y)$ is either a single element or an closed interval. Assume that $x_0 < x_1$ both in $X_1^{-1}(y)$ and then $x_0+v(x_0) = x_1+ v(x_1)=y$. Then by \eqref{3.2}
$$
x_1- x_0= v(x_0)-v(x_1)= |K_0 \cap (x_0,x_1)|.
$$
Consequently, for all $x\in[x_0, x_1]$ one has $x+v(x)= x_0+v(x_0)= y$ as we wanted. In fact, if $x^{+} = \sup X_1^{-1}(y)$ and $x^{-}= \inf X_1^{-1}(y)$ one has 
$X_1^{-1}(y)=[x^{-}, x^{+}]$. Now, again by construction, one has $\mu_1 (-\infty,y]= \mu_0 (-\infty , x^{+}]$ and $\mu_1 (-\infty , y) = \mu_0 (-\infty, x^{-})$. Playing with these two equations we get $\mu_0 (X_1^{-1}(J))= \mu_1 (J)$ for any interval J, whether open, closed, or semi-open. For instance, 
\begin{equation*}
\begin{split}
\mu_0 (X_1^{-1}[y_1, y_2]) 
&=\mu_0 (y_1^{-}, y_2^ {+}) = \mu_0 (-\infty, y_2^{+}) - \mu_0 (-\infty, y_1^{-}) \\
&= \mu_1(-\infty , y_2] -\mu_1 (-\infty , y_1)) \\
& = \mu_1[y_1,y_2] .
\end{split}
\end{equation*}
In conclusion $\mu_1 = (X_1)_{\#} \mu_0 $.

\end{proof}

\section*{Acknowledgements} 
The authors would like to warmly thank Joan Verdera for his useful comments about the writing of this article. They also acknowledge support by 2017-SGR-395 (AGAUR, Generalitat de Cata\-lunya) and PID2020-112881GB-100 (AEI, Ministerio de Ciencia e Innovaci\'on). J. Orobitg is supported by Severo Ochoa and Maria de Maeztu program for centers CEX2020-001084-M.


\bibliographystyle{amsplain}

\vspace{0.5cm}
{\small
	\begin{tabular}{@{}l}
		J.C.\ Cantero,\\ Departament de Matem\`{a}tiques, Universitat Aut\`{o}noma de Barcelona.\\
		J.\ Orobitg,\\
		Departament de Matem\`{a}tiques, Universitat Aut\`{o}noma de Barcelona,\\
		Centre de Recerca Matem\`atica, Barcelona, Catalonia.\\
		{\it E-mail:} {\tt cantero@mat.uab.cat},\,  {\tt orobitg@mat.uab.cat}
		
\end{tabular}}

\end{document}